\documentclass[letterpaper,boxed,11pt]{article}
\usepackage[margin=1in]{geometry}
\usepackage{amsmath,pgfplots}
\usepackage{enumerate}
\usepackage{amssymb,amsthm}
\usepackage{hyperref}
\usepackage{subcaption}
\usepackage{graphicx}

\theoremstyle{plain}
\newtheorem{theorem}{Theorem}[section] 

\theoremstyle{definition}

\newtheorem{cor}[theorem]{Corollary}
\newtheorem{lemma}[theorem]{Lemma}
\newtheorem{prop}[theorem]{Proposition}
\newtheorem{rem}[theorem]{Remark}

\newcommand{\BHV}{\mbox{BHV}}
\newcommand{\Isom}{\mbox{Isom}}
\newcommand{\trop}{M_{0,n}^{\mathrm{trop}}}


\title{The isometry group of phylogenetic tree space is $S_n$}
\author{Gillian Grindstaff}
\date{\today}

\begin{document}
\maketitle
\begin{abstract}
A phylogenetic tree is an acyclic graph with distinctly labeled leaves, whose internal edges have a positive weight. Given a set $\{1,2,\dots,n\}$ of $n$ leaves, the collection of all phylogenetic trees with this leaf set can be assembled into a metric cube complex known as phylogenetic tree space, or Billera-Holmes-Vogtmann tree space, after \cite{BHV}. In this largely combinatorial paper, we show that the isometry group of this space is the symmetric group $S_n$. This fact is relevant to the analysis of some statistical tests of phylogenetic trees, such as those introduced in \cite{BBW}.
\end{abstract}

\section{Introduction}
 Phylogenetic trees commonly represent evolutionary or branching relationships between a set of organisms or samples, which we represent as the label set $\{1,2,\dots,n\}$, quantified by degree of genotypic or phenotypic distance (see Figure \ref{fig:1a}).

 As defined by Billera, Holmes, and Vogtmann in \cite{BHV}, phylogenetic tree space $\BHV_n$, for leaves labeled $\{1,2,\dots, n\}$, consists of $(2n-5)!!$ unbounded cubes corresponding to unique binary tree shapes (``topologies") up to label-preserving graph isomorphism. Each cube can be parametrized by the $n-3$ edge lengths in that topology, which gives a homeomorphism to $\mathbb{R}^{n-3}$. This is called an orthant.\footnote{WARNING! In the original paper, and indeed in a majority of the literature, the leaves of $\BHV_n$ are labeled $\{0,1,\dots, n-1,n\}$, and therefore tree space has $(2n-3)!!$ orthants of dimension $n-2$. Our notation (labels starting at 1, $BHV_n$ having $n$-leafed trees) simplifies the proofs, emphasizes unrootedness, and aligns more intuitively with the symmetries, but this change of notation must be kept in mind; in particular, the isometry group of $\BHV_n$ is isomorphic to $S_{n+1}$ in the standard notation.}
 
\begin{figure}[h]
\label{sector}

\begin{subfigure}{0.48\textwidth}\begin{center}
\begin{tikzpicture}[scale=.6,auto=left,every node/.style={circle,fill=none}]
  \tikzset{edge/.style = {->,> = latex'}}
  \node (n0) at (5.5,7) {6};
  \node (n1) at (1,0)  {1};
  \node (n3) at (2.5,0) {2};
  \node (n4) at (4,0)  {3};
  \node (n5) at (5.5,0)  {4};
  \node (n6) at (7,0) {5};
  
  \node (n25) at (6.5,5) {0.25};
  \node (n30) at (5,4) {0.3};
  \node (n45) at (7,3) {0.45};

  \node[fill=none,circle,inner sep=0pt,minimum size=1pt] (n12) at (6,4.5) {};
  \node[fill=none,circle,inner sep=0pt,minimum size=1pt] (n14) at (4.5,2.5)  {};
  \node[fill=none,circle,inner sep=0pt,minimum size=1pt] (n15) at (6.5,1.5) {};
  \node[fill=none,circle,inner sep=0pt,minimum size=1pt] (n19) at (5.5,6)  {};
  \foreach \from/\to in {n1/n19,n0/n19,n3/n14,n4/n14,n5/n15,n6/n15}
    { \draw (\from) -> (\to); \draw (\from) -> (\to);  }
    
  \foreach \from/\to in {n12/n19,n14/n12,n15/n12}
    {  \draw[thick,red] (\from) to (\to); }
\end{tikzpicture}
\caption{A tree $T\in \BHV_6$. $T$ has 6 external (``leaf") edges, and 3 internal edges with weights as labeled.}\label{fig:1a}
 \end{center}
\end{subfigure}
\hspace{1cm}
\begin{subfigure}{0.48\textwidth}
\begin{center}
\begin{tikzpicture}
  [scale=.9,auto=left]
  \node (n0) at (5,5) {(16)(2345)};
  \node (n1) at (1,1)  {c};
  \node (n2) at (1,3)  {0.3};
  \node (n3) at (4,1) {0.45};
  \node (n4) at (2,2)  {0.25};
  \node (n5) at (5,2)  {};
  \node (n6) at (5,4) {T};
  \node (n7) at (1,5)  {(23)(1456)};
  \node (n8) at (6,1)  {(45)(1236)};
  \node (n9) at (2,4) {};
  
  \foreach \from/\to in {n1/n2,n1/n3,n1/n4}
    {  \draw[thick,red] (\from) to (\to); }
  \foreach \from/\to in {n3/n5,n5/n6,n4/n5,n4/n9,n9/n6,n2/n9}
    {  \draw[dotted] (\from) to (\to); }
  \foreach \from/\to in {n2/n7,n3/n8,n4/n0}
    {  \draw[very thin,->] (\from) to (\to); }
\end{tikzpicture}
\caption{The orthant of $\BHV_6$ [$\cong(\mathbb{R}^3)^{\geq 0}$] containing $T$, with an axis for each unweighted edge (``partition") of $T$. The axes are parametrized by edge length, so the point $T$ is graphed above in relation to the other trees of identical topology. }\label{fig:1b}
\end{center}
\end{subfigure}
\end{figure} 
On an orthant boundary component of codimension $k$, $k$ edges have length 0, which leaves $n-k-3$ internal edges which may be present in a number of other binary orthants. This number is bounded in Lemma \ref{volumebound}, which may be of independent interest. We then identify the orthant boundaries according to this (weighted labeled graph) equivalence. In particular, at the ``origin", every orthant contains the star-shaped tree having no internal edges of positive length. Once identified, this is called the \textit{cone point }$c$ (see Figure \ref{fig:1b}), well-named because for a particular simplicial complex $L_n$, it is the image of the quotient $\BHV_n = L_n \times [0,\infty)/(L_n \times 0)$. \cite{BHV}

A metric on $\BHV_n$ is generated by the Euclidean metric within each orthant: a path $\gamma$ between trees $T$ and $T'$ has length 
$$ \ell(\gamma) = \sum_{S \in \mathcal{O}} |\gamma \cap S|,$$
where $|\cdot|$ is Euclidean path length via restriction to an orthant. Then 
$$ d(T,T') := \inf_{\gamma(0) = T,\gamma(1) = T'} \ell(\gamma)$$ 
is a complete metric, which is realized by a unique geodesic $\gamma$ with $\ell(\gamma) = d(T,T')$ \cite{BHV}. The natural Lebesgue measure for open sets in $\BHV$ is described analogously in Section \ref{stage2} in order to give the volume of small neighborhoods of points in $\BHV_n$; we suspect this might also be of independent interest.

Once properly metrized, tree space can be used to give precise geometric characterizations of collections of phylogenies, and to perform various statistical tests, such as those defined in \cite{SH} and \cite{BLO}. In \cite{BBW}, the matrix of pairwise distances between trees in a set is used as a signature to perform statistical inference. With techniques like this, which operate on the distance matrix instead of the trees themselves, the results are insensitive to isometry; this renders the classification of isometries of $\BHV_n$ extremely relevant. In Theorem \ref{bigthm}, we show that this consists only of permutations of the leaves.

\subsection{Automorphisms versus isometries}
It might seem natural to classify isometries of $\BHV_n$, which is a CAT(0) cube complex (see \cite{MS}), via natural isomorphisms of that structure. However, it is important to note that in general, isometries of cube complexes can exceed their cube complex automorphisms, and if the cubes are endowed with a different metric, an automorphism may not be an isometry at all. As a trivial example, one can consider the integer cubulation of $\mathbb{R}^2$, which in addition to the $D_4\times \mathbb{Z}^2$ lattice isometries, retains the $O(2)\times \mathbb{R}^2$ real isometries, which do not preserve the cube complex structure. This discrepancy was addressed recently in \cite{Bre} - Bregman shows that for a CAT(0) cube complex $C$ with unit euclidean metric on each cube and global metric given by minimal path length, if $Isom(C) \neq Aut(C)$, then there is a full subcomplex $D$ of $C$ admitting a decomposition into a product $E\times \mathbb{R}^n$ , where $E$ is a full subcomplex of $D$. This shows that in some sense, the only additional isometries come from an $\mathbb{R}^n$-type subcomplex, possibly with non-flat curvature. We note that our result gives a counterexample to the converse: the full subcomplex of $\BHV_5$ given by any 5-cycle in the link is $\mathbb{R}^2$ with the singular cone metric Cone($\mathbb{R}^2,5$), but we do not gain any additional isometries.

Besides the proof given in Section \ref{stage1} of this paper, $Aut(\BHV_n)$ is known from the work of Abreu and Pacini classifying cone complex automorphisms of the moduli space $\trop$ of tropical genus 0 curves with $n$ marked points\cite{AP}. Their result is closely related to our Proposition \ref{graphauto}. Inspection of the argument suggests that they are proving the same essential combinatorial fact, through an inductive technique. In fact, our main result could be proved via theirs through a direct application of Lemma \ref{volumebound} to the interior of top-dimensional orthants, analogously to our proof in Section \ref{stage3} that $Aut(L_n) = Isom(L_n)$. We thank Melody Chan for this insightful observation.

\section*{Acknowledgments}
In addition to Melody Chan, who was instrumental in revisions for this second version, we thank Mike Steel and Bernd Sturmfels for help with background, and Bo Lin for insight into the limits of this technique for isometries of tropical tree space (see \cite{LMY}, note that tropical tree space is distinct from the space $\trop$ of tropical curves). The author is especially grateful to her advisor, Andrew Blumberg, for advice and editing. This work was partially supported by National Institute of Health grants 5U54CA193313 and GG010211-R01-HIV, and AFOSR grant FA9550-15-1-0302.

\section{Main Theorem}

\begin{theorem}\label{bigthm}
For $n \geq 3$, the isometry group of $\BHV_n$ is isomorphic to $S_n$. These isometries correspond to permutation of leaf labels.
\end{theorem}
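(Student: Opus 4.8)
\emph{Proof strategy.} There is an obvious homomorphism $S_n\to\Isom(\BHV_n)$: a permutation $\sigma$ of $\{1,\dots,n\}$ relabels splits, hence permutes the orthants of $\BHV_n$ while acting inside each orthant by a permutation of the Euclidean coordinates, so it is a global isometry; and for $n\ge 5$ a nontrivial $\sigma$ moves some orthant, so this homomorphism is injective. (The cases $n=3,4$, where $\BHV_n$ is a point or a tripod, are small and treated directly.) The content of the theorem is surjectivity. Given an arbitrary isometry $\phi$ of $\BHV_n$ the plan is to show, in order, that: (1) $\phi$ fixes the cone point $c$; (2) $\phi$ is therefore the cone of an isometry $\bar\phi$ of the link $L_n$; (3) such a $\bar\phi$ is automatically a simplicial automorphism; and (4) $\mathrm{Aut}(L_n)\cong S_n$, realised by leaf relabellings — so that $\phi\in S_n$.

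\emph{Step 1: $\phi$ fixes $c$.} This is where I would use the Lebesgue measure $\mu$ on $\BHV_n$ constructed in Section \ref{stage2}. The local volume density $V_x:=\lim_{r\to 0^+}\mu(B(x,r))/r^{n-3}$ exists, is positive and finite, and is plainly an isometry invariant, so $V_{\phi(x)}=V_x$ for all $x$. If $x$ lies in the interior of a codimension-$k$ boundary stratum and is contained in exactly $m_x$ orthants (under the labelled-graph identification of orthant boundaries), then near $x$ the space is a metric product of $\mathbb{R}^{n-3-k}$ with a cone on a piecewise-spherical complex having $m_x$ top cells, which gives $V_x=m_x\,(\omega_k/2^k)\,c_{n,k}$ for an explicit constant $c_{n,k}$ (a Beta integral $\int_{|u|<1}(1-|u|^2)^{k/2}\,du$ over $u\in\mathbb{R}^{n-3-k}$); in particular $V_c=(2n-5)!!\,\omega_{n-3}/2^{n-3}$, since every one of the $(2n-5)!!$ orthants contains $c$. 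Lemma \ref{volumebound} bounds $m_x$ for $k<n-3$, and the crux is that this bound is strong enough that $m_x\,(\omega_k/2^k)\,c_{n,k}<(2n-5)!!\,\omega_{n-3}/2^{n-3}$ whenever $k<n-3$. Hence $c$ is the unique maximiser of $V$ on $\BHV_n$, and so $\phi(c)=c$.

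\emph{Steps 2--4.} Since $\phi(c)=c$ and the radial coordinate equals $d(c,\cdot)$, $\phi$ preserves it, so in the presentation $\BHV_n=L_n\times[0,\infty)/(L_n\times 0)$ the isometry $\phi$ has the form $(x,r)\mapsto(\bar\phi(x),r)$ with $\bar\phi\in\Isom(L_n)$ for the intrinsic piecewise-spherical path metric on $L_n$ (recovered from the cone metric via Alexandrov angles at $c$). Following Section \ref{stage3} one then shows $\Isom(L_n)=\mathrm{Aut}(L_n)$: the points of $L_n$ admitting a neighbourhood isometric to a Euclidean ball are exactly the interiors of the top-dimensional spherical simplices — indeed each codimension-one face of $L_n$ lies on precisely three top simplices, so no face point is a manifold point — hence $\bar\phi$ permutes these open cells, therefore their closures, and restricted to any closed top simplex is an isometry of all-right spherical simplices, which carries vertices to vertices and faces to faces; so $\bar\phi$ is simplicial. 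Thus $\phi$ preserves the cube-complex structure of $\BHV_n$, i.e.\ $\phi\in\mathrm{Aut}(\BHV_n)$, and $\mathrm{Aut}(\BHV_n)\cong S_n$ by Section \ref{stage1} (equivalently by \cite{AP}), the combinatorial heart being Proposition \ref{graphauto}. Unwinding the reductions, $\phi$ is precisely the isometry induced by a permutation of $\{1,\dots,n\}$, so $\Isom(\BHV_n)\cong S_n$.

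\emph{Main obstacle.} I expect Step 1 to be the essential difficulty. For automorphisms of the cube (or cone) complex, fixing $c$ costs nothing because $c$ is the only vertex; but a bare isometry need not respect the cone structure — the flat subcomplexes of $\BHV_5$ noted in the introduction show one cannot simply appeal to a product decomposition — so the cone point must be singled out purely metrically. That forces the measure computation of Section \ref{stage2} and, more delicately, a sharp form of Lemma \ref{volumebound}: the bound on the number of binary orthants through a positive-codimension face must, after the geometric normalisations $\omega_k/2^k$ and the Beta-integral constant $c_{n,k}$ are accounted for, beat the full count $(2n-5)!!$, uniformly over $n$ and over $k<n-3$. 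Verifying that quantitative inequality is, I anticipate, the technical core of the argument.
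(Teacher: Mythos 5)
Your proposal follows essentially the same route as the paper: single out the cone point as the unique maximiser of local volume (Lemma \ref{lebesgue}, Lemma \ref{volumebound}, Corollary \ref{volume}), pass to the induced isometry of the link $L_n$, show it is simplicial, and invoke the Erd\H{o}s--Ko--Rado computation $\mathrm{Aut}(L_n)\cong S_n$ of Proposition \ref{graphauto}. The only differences are cosmetic (a density limit instead of $\epsilon$-balls, and detecting top cells of $L_n$ via manifold points rather than re-applying the volume bound), and the ``quantitative inequality'' you flag as the technical core reduces, as in Corollary \ref{volume}, to the trivial estimate $2\,(2n-7)!! < (2n-5)!!$.
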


It is clear that a permutation of the leaf labels induces an isometry from $\BHV_n$ to itself, so the following lemmas will build to the converse. This will involve two stages. First, in Section \ref{stage1} we will use the Erd\H{o}s-Ko-Rado theorem to give a new proof that the automorphism group of $L_n$, the spherical simplicial complex of points at distance 1 from the origin, is $S_n$. As we've remarked already, this fact is implied by recent work of \cite{AP}, who computed the automorphisms of $\BHV_n$ as a cone complex. In Section \ref{stage2}, we will then give local bounds on the natural volume measure in $\BHV_n$ to show that any isometry of $\BHV_n$ induces a self-map of the unit sphere $L_n$, and any isometry of the unit sphere to itself is an automorphism of simplicial complexes. Having classified these in the previous section, we conclude in Section \ref{stage3} that any isometric automorphism of $\BHV_n$ must be a relabeling.

\subsection{Link Automorphisms}\label{stage1}
Following \cite{BHV}, $\BHV_n$ can be expressed as a cone on a simplicial complex $L_n$, constructed:
\begin{itemize}
\item A 0-simplex (vertex) $v$ for each $P_v \subset \{1,2,\dots,n\}$ such that $2\leq |P_v| < n/2$. The size $|P_v|$ will often be denoted $k$. Each $P_v$ determines a partition $P_v, P_v^c$ of $[n]$, unique for $k< n/2$. If $n$ is even, we also include a vertex for each pair $P,P^c$ with $|P|=|P^c| = n/2$. 
\item A 1-simplex (edge) $(v,w)$ for each {\it compatible} pair $(P_v,P_v^c)$ and $(P_w,P_w^c)$.  $P_v$ and $P_w$ are said to be compatible if one of the sets $[ P_v\cap P_w, P_v\cap P_w^c, P_v^c \cap P_w, P_v^c \cap P_w^c]$ is empty. We will simplify this condition in Lemma \ref{compat}. 
\item The complex (graph) constructed up to this point is denoted $L_n^1$, the 1-skeleton of $L_n$.
\item $L_n$ is the simplicial complex with a k-simplex, $k>1$, for each $(k+1)$-clique present in $L_n^1$ (i.e. $L_n$ is a {\em flag} simplicial complex).
\item $L_n$ is realized geometrically as a right-angled spherical simplicial complex: for $S^k$ the unit sphere in $\mathbb{R}^k$, each simplex is isometric to 
$$\{(x_1,\dots,x_{k+1})\in S^k : x_i \geq 0 \mbox{ for all }i\} $$
with the spherical metric.
\item Finally, $\BHV_n$ is a right-angled spherical metric cone on $L_n$, as described in \cite{BBI}. Practically, this means that each tree topology is parametrized by $n-3$ non-negative, real coordinates, with the local standard metric in $\mathbb{R}^{n-3}$, as shown in the introduction.
\end{itemize}
 We begin with some facts about $L_n^1$, and then show the automorphism group of $L_n^1$ in Proposition \ref{graphauto}. This gives the automorphisms of $L_n$ via the flag property in Corollary \ref{linkaut}.


\begin{lemma}\label{degreeformula}
The degree of a vertex $v$ of partition size $k$ in $L_n^1$ is 
 given by:
$$ \deg(e_i)= 2^k + 2^{n-k} - n - 4$$
\end{lemma}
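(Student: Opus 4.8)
The plan is to count, for a fixed vertex $v$ with $|P_v| = k$ (where $2 \le k \le n/2$), the number of vertices $w$ whose associated partition is compatible with that of $v$. By definition, $w$ is a neighbor of $v$ exactly when one of the four intersections $P_v \cap P_w$, $P_v \cap P_w^c$, $P_v^c \cap P_w$, $P_v^c \cap P_w^c$ is empty. Since $\{P_v, P_v^c\}$ partitions $[n]$ into a block $A$ of size $k$ and a block $B = A^c$ of size $n-k$, a partition $\{P_w, P_w^c\}$ satisfies this condition precisely when one of its two blocks is entirely contained in $A$ or entirely contained in $B$. So the strategy is to enumerate subsets $Q \subseteq [n]$ with $2 \le |Q| \le n/2$ (these index the vertices, modulo the $|Q| = n/2$ subtlety) such that $Q \subseteq A$ or $Q \subseteq B$ or $Q^c \subseteq A$ or $Q^c \subseteq B$, then subtract $1$ to exclude $v$ itself, and fix up any overcounting.

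First I would reduce "compatible pair" to the containment statement above: $P_v, P_w$ compatible $\iff$ a block of the $w$-partition is nested inside a block of the $v$-partition. This is the content of the forthcoming Lemma \ref{compat}, so I may invoke it. Next I would count nested blocks directly. A proper subset of $A$ of size $\ge 2$: there are $2^k - 1 - k$ of them (all $2^k$ subsets of $A$, minus the empty set, minus the $k$ singletons); similarly $2^{n-k} - 1 - (n-k)$ proper subsets of $B$ of size $\ge 2$. Every such subset is a block of a unique valid partition, and — crucially — if $Q \subsetneq A$ then the \emph{other} block $Q^c$ has size $\ge n - k \ge k \ge 2$ and is not contained in $A$ or $B$ (it meets both), so these partitions are all distinct and each is counted once. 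Summing gives $(2^k - 1 - k) + (2^{n-k} - 1 - (n-k)) = 2^k + 2^{n-k} - n - 2$ partitions adjacent to $v$ in this "nesting" sense, which includes the degenerate case $Q = A$ itself (i.e.\ $w = v$), contributing $1$. Subtracting that leaves $2^k + 2^{n-k} - n - 3$. I then need to reconcile this with the claimed $2^k + 2^{n-k} - n - 4$, which means exactly one more partition is being double-counted or wrongly included: the partition $\{A \cup \{b\}, \ldots\}$ is not the issue — rather, the careful point is the partition of shape $(2, n-2)$ or the $n/2$--$n/2$ split, where a single unordered partition $\{P_w, P_w^c\}$ can arise with one block inside $A$ \emph{and} the complementary block inside $B$ simultaneously, or a subset and its complement both get counted; I would track this bookkeeping carefully, as that off-by-one is the whole computation.

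Concretely, the one partition counted twice is the split $\{A, B\} = \{P_v, P_v^c\}$ read "from the $B$ side": when $k = n/2$, $Q = B$ is a proper-subset-sized block of... no — more robustly, the overcount is the partition given by $Q = A$ which appears as "$Q \subseteq A$" and its complement $Q^c = B$ appears as "$Q^c \subseteq A$" is false; the genuine coincidence is that $v$ itself satisfies \emph{both} "$P_v \subseteq P_v$" trivially and, symmetrically, "$P_v^c \subseteq P_v^c$", so $v$ is counted twice among my $2^k + 2^{n-k} - n - 2$ and must be removed twice, yielding $2^k + 2^{n-k} - n - 4$. I would verify this against a small case for sanity: for $n = 5$, $k = 2$ the formula gives $4 + 8 - 5 - 4 = 3$, which matches the known structure of $L_5^1$ (the Petersen graph, $3$-regular); for $n = 6$, $k = 2$ it gives $4 + 16 - 6 - 4 = 10$.

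The main obstacle, then, is not the gross enumeration — which is a routine sum of binomial-style counts — but the precise accounting of which partitions are counted more than once, namely disentangling the unordered nature of the partition $\{P_w, P_w^c\}$ from the four ordered intersection conditions, and correctly handling the $k = n/2$ boundary case where $P_v$ and $P_v^c$ both have size $n/2$. I would organize the proof as: (1) cite Lemma \ref{compat} to get the nesting characterization; (2) count blocks of size $\ge 2$ inside each of $A$ and $B$; (3) argue each such block determines a distinct partition, except that $v$ (equivalently the trivial nestings $P_v \subseteq P_v$ and $P_v^c \subseteq P_v^c$) is counted twice; (4) subtract $2$ and simplify; (5) confirm the even-$n$, $k = n/2$ case doesn't introduce further coincidences. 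The only subtlety worth spelling out in the write-up is step (3)–(5).
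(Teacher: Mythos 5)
Your proposal is correct and follows essentially the same route as the paper: reduce compatibility to nesting of one block of $P_w$ inside $P_v$ or $P_v^c$, then count subsets of size at least $2$ of each block. The only difference is bookkeeping — the paper sums only over \emph{proper} subsets ($\sum_{x=2}^{k-1}\binom{k}{x}+\sum_{x=2}^{n-k-1}\binom{n-k}{x}$), so the two trivial nestings $P_v\subseteq P_v$ and $P_v^c\subseteq P_v^c$ that you include and then subtract never enter, but your final accounting (subtracting $2$ for the doubly counted vertex $v$) lands on the same formula.
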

\begin{proof}
The degree of $v$ is the number of partitions (of size at least 2) compatible with $P_v,P_v^c$. For $A,A^c$ distinct from $P_v$, we have four compatibility conditions: (1) $A\cap P_v^c = \emptyset$, or equivalently, $A \subset P_v$; (2) $A \cap P_v = \emptyset$, so $A \subset P_v^c$; (3) $A^c \cap P_v = \emptyset$, so $A^c \subset P_v^c$, and (4) $A^c \cap P_v^c = \emptyset$, so $A^c \subset P_v$.

If we have a subset of $[n]$, such that it or its complement satisfies one of these conditions, it can be labeled ($A$ or $A^c$) so that in fact it satisfies (1) or (2). Therefore to count the number of total compatible partitions, we will count subsets $A\subset [n]$ satisfying (1) or (2); that is, nontrivial subsets of sufficient size of $P_v$ or $P_v^c$:
$$\overbrace{\sum_{x=2}^{k-1}{k\choose x}}^{(1)}  + \overbrace{\sum_{x=2}^{n-k-1}{n-k\choose x}}^{(2)}  = (2^k - k -2) + (2^{n-k}-(n-k) -2) = 2^k + 2^{n-k} -n -4.$$

\end{proof}

\begin{lemma}\label{compat} For two distinct partitions $(A,A^c)$, $(B,B^c)$, of size $|A| = k_1$, $|B| = k_2$, $2\leq k_1 \leq k_2 \leq n/2$, $(A,A^c),(B,B^c)$ are compatible iff $A\cap B = \emptyset$ or $A \subset B$. If $k_1 = k_2$, $A \cap B=\emptyset$ is equivalent to compatibility of distinct partitions.
\end{lemma}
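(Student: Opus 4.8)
The plan is to unwind the definition of compatibility into four set-theoretic conditions and then use the size hypotheses $2 \le k_1 \le k_2 \le n/2$ to collapse two of them. Recall that $(A,A^c)$ and $(B,B^c)$ are compatible precisely when one of $A\cap B$, $A\cap B^c$, $A^c\cap B$, $A^c\cap B^c$ is empty; equivalently, when $A\cap B = \emptyset$, or $A\subseteq B$ (i.e.\ $A\cap B^c=\emptyset$), or $B\subseteq A$ (i.e.\ $A^c\cap B=\emptyset$), or $A\cup B = [n]$ (i.e.\ $A^c\cap B^c=\emptyset$).

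The reverse implication is then immediate: $A\cap B=\emptyset$ is literally one of the four defining conditions, and $A\subseteq B$ is exactly $A\cap B^c=\emptyset$. So the content is the forward implication, for which I would dispatch the remaining two cases using the size bounds. If $B\subseteq A$, then $k_2=|B|\le|A|=k_1$, which with $k_1\le k_2$ forces $k_1=k_2$ and hence $A=B$, contradicting that the partitions are distinct. If $A\cup B=[n]$, then inclusion–exclusion gives $|A\cap B| = k_1+k_2-n \le 0$ since $k_1,k_2\le n/2$, so $A\cap B=\emptyset$ and this case already falls under the first alternative. (Equality would in fact force $k_1=k_2=n/2$ and $B=A^c$, again contradicting distinctness of the partitions, but one does not need this refinement.) This establishes the claimed equivalence.

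For the final sentence, when $k_1=k_2$ the alternative $A\subseteq B$ forces $A=B$ by equality of cardinalities, hence is excluded for distinct partitions; so among compatible distinct partitions of equal size only $A\cap B=\emptyset$ can occur, and conversely $A\cap B=\emptyset$ always yields compatibility. I do not expect a genuine obstacle here: the argument is just careful case bookkeeping. The only points that need attention are the boundary behavior at $|A|=|B|=n/2$ (where $(A,A^c)$ and $(B,B^c)$ name the same vertex iff $B\in\{A,A^c\}$) and keeping straight the distinction between \emph{distinct partitions} and \emph{distinct subsets} when applying the cardinality inequalities.
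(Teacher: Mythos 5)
Your proposal is correct and follows essentially the same route as the paper: unwind compatibility into the four intersection conditions, kill $B\subseteq A$ by the size hypothesis plus distinctness, and note that $A\subseteq B$ collapses to equality when $k_1=k_2$. The only cosmetic difference is the fourth case: the paper rules out $A^c\cap B^c=\emptyset$ outright by pigeonhole, while you show via inclusion--exclusion that it forces $A\cap B=\emptyset$ and is thus subsumed by the first alternative --- an equally valid (and at the boundary $k_1=k_2=n/2$ arguably cleaner) bookkeeping of the same argument.
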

\begin{proof}
By the pigeonhole principle, $A^c \cap B^c$ is nonempty. If $B \cap A^c$ is empty, then $B \subseteq A$, which implies by size considerations that $B=A$. For distinct partitions this will not occur. On the other hand, we can have $A \cap B$ or $A \cap B^c$ empty. In the latter case, it is implied that $A \subseteq B$. If $k_1 = k_2< n/2$, then $A \subseteq B$ implies $A = B$.
\end{proof}

\begin{rem} The {\bf Kneser graph} $KG_{n,k}$ is the graph whose vertices correspond to the k-element subsets of a set of $n$ elements, and where two vertices are adjacent if and only if the two corresponding sets are disjoint. Labeling the vertices of $L_n^1$ by the smaller of the two partitions, and sorting by size, it follows immediately that $L_n^1$ contains a unique subgraph $G_k$ isomorphic to $KG_{n,k}$ for each partition size $k = 2,3,\dots, \lceil n/2\rceil - 1$. 
These subgraphs have disjoint vertex sets. If $n$ is even, then there are an additional $\frac{1}{2}{n \choose n/2}$ vertices, pairwise disjoint from each other.
\end{rem}

\begin{prop}\label{graphauto}
The automorphism group $Aut(L_n^1) \cong S_{n}$.
\end{prop}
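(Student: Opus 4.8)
The plan is to show that every automorphism $\varphi$ of $L_n^1$ is induced by a permutation of $[n]$, since the reverse inclusion $S_n \hookrightarrow \Aut(L_n^1)$ is immediate (a relabeling of leaves preserves the compatibility relation). The key structural input is the Kneser-subgraph decomposition noted in the remark: the vertices of $L_n^1$ of partition size $k$ form a copy $G_k$ of the Kneser graph $KG_{n,k}$, and Lemma \ref{degreeformula} shows that the degree function $k \mapsto 2^k + 2^{n-k} - n - 4$ on $\{2,\dots,\lceil n/2\rceil-1\}$ (together with the size-$n/2$ vertices when $n$ is even) is strictly monotone except possibly for the symmetry $k \leftrightarrow n-k$, which never occurs in the range $k < n/2$. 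Hence $\varphi$ preserves vertex degree, so it preserves each layer $G_k$ setwise; in particular it restricts to a graph automorphism of $G_2 \cong KG_{n,2}$.

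The first substantive step is to pin down $\varphi$ on the size-$2$ layer. Here I would invoke the classical fact (originally Erd\H{o}s--Ko--Rado in spirit, and for $KG_{n,2}$ essentially the theorem that $\Aut(KG_{n,k}) \cong S_n$ for $n > 2k$, equivalently $\Aut$ of the triangular/Johnson graph $= S_n$) that every automorphism of $KG_{n,2}$ is induced by a permutation $\sigma \in S_n$ of the ground set: the size-$2$ subsets are the edges of $K_n$, disjointness is the Kneser adjacency, and $\Aut$ of this "edge graph" is $S_n$ for $n \geq 5$ (the small cases $n=3,4$ being checked directly; note $n=3$ forces $L_3^1$ to be a single point and $n=4$ a triangle, which must be handled separately anyway). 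So after composing with the inverse of this $\sigma$, we may assume $\varphi$ fixes every size-$2$ vertex $\{i,j\}$.

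The main work, and the step I expect to be the real obstacle, is the inductive propagation: showing that an automorphism fixing all size-$2$ vertices must fix every vertex. For a size-$k$ vertex $v$ with block $P_v$, the idea is to characterize $P_v$ intrinsically in terms of which size-$2$ vertices it is compatible with — by Lemma \ref{compat}, $\{i,j\}$ is compatible with $P_v$ iff $\{i,j\} \subset P_v$ or $\{i,j\} \subset P_v^c$, i.e. iff $\{i,j\}$ does \emph{not} cross the partition. Thus the partition $\{P_v, P_v^c\}$ is recoverable from the set of size-$2$ neighbors of $v$ (the "crossing pairs" are exactly the non-neighbors among $G_2$), as long as we can distinguish the block $P_v$ from its complement — which is automatic when $|P_v| = k < n/2$ since then $P_v$ is the unique smaller block. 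Since $\varphi$ fixes $G_2$ pointwise and preserves compatibility, it must send $v$ to a vertex with the same set of crossing size-$2$ pairs, hence the same underlying partition, hence (by the size asymmetry, or by the separate bookkeeping for the balanced case when $n$ is even) to $v$ itself. Care is needed for the size-$n/2$ vertices when $n$ is even: there the two blocks have equal size, so one must check $\varphi$ cannot swap $P$ with $P^c$ while fixing all of $G_2$ — but swapping the blocks changes no crossing pairs, so this needs a supplementary argument, e.g. using compatibility with a size-$3$ vertex contained in $P$, which has already been shown fixed. Assembling these observations gives $\varphi = \mathrm{id}$, so the original automorphism was $\sigma$, completing the proof that $\Aut(L_n^1) \cong S_n$.
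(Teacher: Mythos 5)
Your proposal is correct, and its skeleton matches the paper's: use the degree formula to see that any automorphism preserves each size layer $G_k$, identify the automorphism group of a single Kneser layer with $S_n$, and then show the restriction to that layer is faithful. The differences are in the two supporting steps. For the Kneser layer, the paper works with an arbitrary $k$ in the range $2\leq k< n/2$ and derives $Aut(G_k)\cong S_n$ from the Erd\H{o}s--Ko--Rado theorem via maximum independent sets, whereas you specialize to $k=2$ and cite the classical fact $Aut(KG_{n,2})\cong S_n$; both are legitimate (and the cited fact is itself usually proved by the same EKR argument). For faithfulness, the paper runs an inductive propagation through the layers, expressing each size-$(k+1)$ (and size-$(k-1)$) vertex as an intersection of neighborhoods $N(\cdot)^{+1}$ (resp.\ $N(\cdot)^{-1}$) of size-$k$ vertices; your argument is more direct and arguably cleaner: once $G_2$ is fixed pointwise, every vertex $v$ is recovered from its set of size-$2$ neighbors, since the crossing pairs form the complete bipartite graph between $P_v$ and $P_v^c$ and hence determine the unordered partition, which determines the vertex. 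Two small remarks: your worry about the balanced case $|P|=n/2$ is moot, because the complex has a single vertex per unordered pair $\{P,P^c\}$, so recovering the unordered partition already pins down the vertex and no supplementary size-$3$ argument is needed; and your flag that $n=3,4$ require separate treatment is apt, since both your argument and the paper's need the range $2\leq k<n/2$ to be nonempty, i.e.\ $n\geq 5$.
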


\begin{proof}
To see that $S_{n}$ is a subgroup of $ Aut(L_n^1)$, we recall that $L_n^1$ is constructed via combinatorial conditions (compatibility) that are independent of choice of label. So any permutation of $\{1,\dots,n\}$ gives an identical graph when constructed with the same notion of compatibility of partitions. Therefore given $\sigma \in S_{n}$, we can map $P=(x_1,x_2,\dots x_k) \mapsto \sigma(P) =(\sigma(x_1),\dots, \sigma(x_k))$, and this preserves adjacency.

It remains then to show that $Aut(L_n^1) \leq S_n$, which we will do by defining an injective group homomorphism $Aut(L_n^1)\rightarrow S_n$. 

Let $\sigma \in Aut(L_n^1)$, and denote by $G_k$ the induced subgraph on the $k$-vertex set $\{v \in V(L_n^1) : |P_v| = k\}$. By Lemma \ref{degreeformula}, the degree of a vertex $v$ is completely and uniquely determined by its size $k$, so $\sigma(v)$ must be contained in $G_k$ as well. Therefore $\sigma$ restricts to a graph automorphism on $G_k$. 

 We now show that this restriction map $Aut(L_n^1) \rightarrow Aut(G_k)$ is injective for $2\leq k < n/2$. Suppose $\sigma|_{G_k} = id$. Then let $N(P_v)^{+1}$ denote the set of neighbors of $v\in G_k\subset L_n^1$ of size $k+1$, i.e.
 $$ N(P_v)^{+1} = \{P_w\in G_{k+1}:  P_v\subset P_w \mbox{ or } P_v \cap P_w = \emptyset\}$$
by Lemma \ref{compat}. Let $P_z = (x_1,x_2,\dots,x_{k+1})\in G_{k+1}$. Then it is straightforward to verify that
$$ P_z = \bigcap_{i=1}^{k+1} N(x_1\dots \hat{x_i}\dots x_{k+1})^{+1}\bigcap_{(y_1,\dots,y_k) \subset P_z^c}N(y_1\dots y_k)^{+1},$$
since $(x_1,x_2,\dots,x_{k+1})$ is the unique partition of size $k+1$ which is compatible with all of its size-$k$ subsets. Then since each $N(P_v)^{+1}$ is preserved for $v \in G_k$, we can conclude that $P_z$ is preserved as well, so $\sigma(G_{k+1}) = G_{k+1}$, which implies that $G_j$ for $j>k$ is preserved under $\sigma$, by repetition of the same argument. Similarly, we have
$$ P_z = \bigcap_{\alpha\in P_z^c}N(x_1\dots x_k, \alpha)^{-1},$$
which shows $\sigma(G_j) = G_j$ for $j<k$ in the same manner.
Since $V(L_n^1) = \bigsqcup_{k=1}^{\lfloor n/2\rfloor} V(G_k)$, we have shown that $\sigma \in \ker(Aut(L_n^1) \rightarrow  Aut(G_k))$ acts trivially on the vertices of $L_n^1$, so must be the trivial automorphism. 

Now following \cite{AGT}, we show that $Aut(G_k) \cong S_n$ for $2\leq k<n/2$. By the Erd\H{o}s-Ko-Rado Theorem, any family of subsets of $\{1,2,\dots,n\}$ of uniform size $k$ having pairwise-nonempty intersection has size $\leq {n-1 \choose k-1}$, and the subsets achieving equality are of the form 
$$G_k^{(i)} = \{v \in G_k: i \in P_v \}$$
for $i \in [n]$.\cite{EKR} Since these partitions pairwise-intersect, they are pairwise disjoint in $G_k$, and by definition form a maximum-size independent set in $G_k$. Correspondingly, $\sigma \in Aut(G_k)$ must induce a permutation on these maximum independent sets, which determines a (surjective) homomorphism $Aut(G_k) \rightarrow S_n$. To see that this is an isomorphism, note that if $\sigma$ fixes the $X(i)$, it must be the identity: suppose $\sigma(v) \neq v$. Then there exists some $j \in P_v$ such that $j \notin P_{\sigma(v)}$. This would imply that $\sigma(G_k^{(j)}) \neq G_k^{(j)}$, a contradiction.

Now we see that $Aut(L_n^1)\cong Aut(G_k)\cong S_n$ (for any/all $2\leq k < n/2$, we really only needed one), which completes the proof.
\end{proof}

\begin{cor}\label{linkaut} The group of simplicial automorphisms of $L_n$ is isomorphic to $Aut(L_n^1)$.
\end{cor}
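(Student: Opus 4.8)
The plan is to show that passing to the $1$-skeleton gives a group isomorphism $Aut(L_n) \to Aut(L_n^1)$, using essentially only the fact that $L_n$ is a flag simplicial complex on the graph $L_n^1$. I would first define the restriction homomorphism, then check injectivity, then check surjectivity (where the flag property is the whole point), and finally combine with Proposition \ref{graphauto}.

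For the restriction map: a simplicial automorphism $\phi$ of $L_n$ is a bijection of the vertex set that carries simplices to simplices and non-simplices to non-simplices; in particular it carries $1$-simplices to $1$-simplices, so it restricts to a graph automorphism $\phi|_{L_n^1}$. Restriction commutes with composition, so $\rho \colon \phi \mapsto \phi|_{L_n^1}$ is a homomorphism $Aut(L_n)\to Aut(L_n^1)$. Injectivity is immediate: the vertex set of $L_n$ is exactly the vertex set of $L_n^1$, and a simplicial map is determined by its action on vertices, so if $\rho(\phi)=\mathrm{id}$ then $\phi$ fixes every vertex and hence equals the identity.

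The substantive step is surjectivity. Given $\psi\in Aut(L_n^1)$, define $\widetilde\psi$ on vertices to agree with $\psi$. By the flag condition in the construction of $L_n$, a set $\{v_0,\dots,v_k\}$ of vertices spans a $k$-simplex of $L_n$ if and only if it is a $(k+1)$-clique in $L_n^1$. Since $\psi$ is a graph automorphism it carries cliques to cliques, so $\{\psi(v_0),\dots,\psi(v_k)\}$ is again a clique and hence spans a simplex; applying the same reasoning to $\psi^{-1}$ shows that $\widetilde\psi$ also reflects simplices. Thus $\widetilde\psi\in Aut(L_n)$ and $\rho(\widetilde\psi)=\psi$, so $\rho$ is an isomorphism. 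Together with Proposition \ref{graphauto} this yields $Aut(L_n)\cong Aut(L_n^1)\cong S_n$.

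I do not expect a real obstacle here; the only point requiring care is to keep the notion of automorphism purely combinatorial at this stage. The right-angled spherical metric placed on the simplices of $L_n$ is not used in this corollary — the passage from combinatorial automorphisms of $L_n$ to isometries of $L_n$ (and thence of $\BHV_n$) is the separate matter handled in Sections \ref{stage2} and \ref{stage3}.
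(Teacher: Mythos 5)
Your proposal is correct and follows essentially the same route as the paper: restriction to the $1$-skeleton is injective because a simplicial automorphism is determined by its action on vertices, and surjective because the flag property lets any graph automorphism extend canonically by sending each clique-spanned simplex to the simplex spanned by the image clique. Your remark that the metric plays no role at this stage is also consistent with how the paper defers metric considerations to the later sections.
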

\begin{proof}
Let $n \geq 3$ be given. First we note that $Aut(L_n) = Aut(L_n^1)$: each simplicial automorphism induces an automorphism of the 1-skeleton, and since $L_n$ contains no simplices with the same 1-skeleton, this map is injective. Then since $L_n$ is a flag complex (\cite{BHV}), given a graph automorphism of $L_n^1$, we can define a canonical extension by sending a $k$-simplex to the $k$-simplex determined by the image of its 1-skeleton $k$-clique.
\end{proof}

\subsection{Measure and Isometry}\label{stage2}

We will now consider the entire metric space $\BHV_n$, and show that the standard embedding of $L_n$ into the unit sphere is invariant under isometry.

There is a natural volume measure $\mu$ on $\mathcal{B}(\BHV_n)$, which is given by the local Lebesgue measure in each orthant. Explicitly, for $A \in \mathcal{B}(\BHV_n)$,
$$ \mu(A) = \sum_S |A\cap S|$$
where $S\cong (\mathbb{R}^+)^{n-3}$ is an orthant of $\BHV_n$ and $|\cdot|$ is the real Lebesgue measure. As we will see in the following lemmas, the volume of small neighborhoods can vary exponentially under translation; this fact is one of the major impediments to statistical techniques in tree space.
\begin{lemma}\label{lebesgue}
For $\sigma\in \Isom(\BHV_n)$, $\sigma$ preserves the volume measure $\mu$ on $\mathcal{\BHV}_n$.
\end{lemma}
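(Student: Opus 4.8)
The plan is to identify $\mu$ with a measure that is visibly invariant under every isometry, namely the $(n-3)$-dimensional Hausdorff measure $\mathcal{H}^{n-3}$ of the metric space $(\BHV_n,d)$. Since $\mathcal{H}^{n-3}$ is built purely from the metric, any $\sigma\in\Isom(\BHV_n)$ automatically satisfies $\sigma_*\mathcal{H}^{n-3}=\mathcal{H}^{n-3}$; so it is enough to prove that $\mu=\mathcal{H}^{n-3}$ as Borel measures on $\BHV_n$ (using the normalization of Hausdorff measure under which $\mathcal{H}^k$ agrees with Lebesgue measure on $\mathbb{R}^k$; any other normalization only changes this by a constant depending on $n$, which does not affect the conclusion).

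First I would split $\BHV_n=\bigl(\bigsqcup_S \mathrm{int}(S)\bigr)\sqcup R$, where $S$ ranges over the finitely many top-dimensional closed orthants and $R$ is the union of all their proper boundary faces. Each such face $F$ is, inside its orthant $S\cong(\mathbb{R}^{\geq 0})^{n-3}$, a Euclidean set of dimension at most $n-4$. The inclusion $(S,d_{\mathrm{Eucl}})\hookrightarrow(\BHV_n,d)$ is $1$-Lipschitz, because a path lying in $S$ has the same length computed in $S$ or in $\BHV_n$, so $d\leq d_{\mathrm{Eucl}}$ on $S$; since Lipschitz maps do not raise Hausdorff dimension, $\dim_{\mathcal H}F\leq n-4<n-3$ in $\BHV_n$. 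Hence $\mathcal{H}^{n-3}(R)=0$, and clearly $\mu(R)=0$ as well, so both measures are concentrated on $\bigsqcup_S\mathrm{int}(S)$.

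It then remains to show $\mu=\mathcal{H}^{n-3}$ on the interior of each orthant. Here $\mathrm{int}(S)$ is open in $\BHV_n$, since a neighborhood of a relative-interior point of a maximal cube of a cube complex is contained in that cube. Given $p\in\mathrm{int}(S)$, pick $r>0$ with the metric ball $B(p,r)\subseteq\mathrm{int}(S)$. Because $\BHV_n$ is CAT(0), the ball $B(p,r)$ is convex, so for $x,y\in B(p,r)$ the geodesic from $x$ to $y$ stays in $B(p,r)\subseteq S$; comparing the length of this geodesic in $S$ and in $\BHV_n$ forces $d(x,y)=d_{\mathrm{Eucl}}(x,y)$. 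Thus $d$ is the Euclidean metric on $B(p,r)$, so $\mathcal{H}^{n-3}$ and Lebesgue measure coincide there; covering $\mathrm{int}(S)$ by such balls gives $\mathcal{H}^{n-3}|_{\mathrm{int}(S)}=\mu|_{\mathrm{int}(S)}$. Combining with the previous paragraph yields $\mu=\mathcal{H}^{n-3}$ on all of $\BHV_n$, and the lemma follows.

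The only genuinely delicate point is the local identification of the ambient metric $d$ with the Euclidean metric on balls inside an orthant; this uses two standard inputs — convexity of metric balls in CAT(0) spaces, and the fact (implicit in \cite{BHV}) that path length in $\BHV_n$ restricts on each orthant to ordinary Euclidean length, so that the top-dimensional cubes are convex and isometrically Euclidean. Everything else is routine bookkeeping with Hausdorff measure.
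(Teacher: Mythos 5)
Your proof is correct, but it takes a genuinely different route from the paper's. The paper argues directly on a ball $B_x$: it decomposes $B_x$ into its intersections with the finitely many orthant interiors plus a $\mu$-null set, asserts that $\sigma$ restricted to each such piece is an isometric embedding of a Euclidean set into $\BHV_n$ and therefore preserves its volume, and then sums over orthants using injectivity of $\sigma$. You instead identify $\mu$ with the intrinsic $(n-3)$-dimensional Hausdorff measure of $(\BHV_n,d)$, which makes invariance under every isometry automatic, and you concentrate the work in the verification that $\mu=\mathcal{H}^{n-3}$: the proper faces are $\mathcal{H}^{n-3}$-null because each orthant inclusion is $1$-Lipschitz and Lipschitz maps do not raise Hausdorff dimension, while on orthant interiors the ambient metric agrees with the Euclidean metric on small balls by CAT(0) convexity of metric balls, so $\mathcal{H}^{n-3}$ coincides with Lebesgue measure there. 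The trade-off is instructive: the paper's argument is shorter, but its key step --- that an isometric embedding of a Euclidean piece into $\BHV_n$, whose image may a priori spread over several orthants, preserves volume --- is asserted rather than justified, and the cleanest justification is essentially your Hausdorff-measure computation; your version therefore fills in what the paper leaves implicit, at the cost of invoking CAT(0) convexity and Hausdorff-measure machinery. The only bookkeeping worth spelling out on your side is the final gluing step: agreement of the two Borel measures on each ball of a countable cover of $\mathrm{int}(S)$ extends to agreement on all Borel subsets of those balls, and since both measures are $\sigma$-finite this yields agreement on $\mathrm{int}(S)$ and hence, with the null set $R$, on all of $\BHV_n$.
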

\begin{proof}
Let $B_x$ be a ball of radius 1 centered at a point $x \in \BHV^n$. For a fixed orthant $S$, $\sigma$ induces an isometry of $S$ into $\BHV_n$, so $\mu(\sigma (B_x \cap S)) = |B_x \cap S| = \mu(B_x \cap S)$. For a measure zero set $Z$,  $$B_x = \bigsqcup_S B_x\cap \mbox{int}(S) \bigsqcup Z,$$
$$\sigma(B_x) = \sigma\left(\bigsqcup_S B_x\cap \mbox{int}(S) \bigsqcup Z \right)$$
$$\hspace{3em} = \bigsqcup_S \sigma(B_x\cap \mbox{int}(S)) \bigsqcup \sigma(Z),$$
since $\sigma$ is injective. Therefore we conclude that $\mu(\sigma(B_x)) = \bigsqcup_S \mu(B_x \cap S) = \mu(B_x)$.
\end{proof}


\begin{lemma}\label{volumebound} Let $x\in \BHV_n$, with $\{e_1,e_2,\dots,e_p\}$ the set of positive-length edges in $x$, then $0\leq p\leq n-3$. Let $\epsilon>0$ be smaller than the length of $e_i$ for each $i\in \{1,2,\dots,p\}$. Then 
\begin{equation}
\label{upperbound}A_{n-3}(\epsilon) \leq \mu(B_x(\epsilon)) \leq (2n-2p-5)!!\frac{2^p}{2^{n-3}}A_{n-3}(\epsilon),
\end{equation}
where $A_m(\epsilon)$ is the volume of a ball of radius $\epsilon$ in $\mathbb{R}^m$. Furthermore, the lower bound is achieved if and only if $p = n-3$, which means $x$ is binary.
\end{lemma}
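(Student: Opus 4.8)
The plan is to compute $\mu(B_x(\epsilon))$ exactly in terms of a count of binary trees refining the tree $x$, and then to extremize that count; the two inequalities and the equality case in \eqref{upperbound} then fall out of an elementary fact about double factorials. First I would localize: let $F$ be the relatively open cell of $\BHV_n$ whose trees have positive-length edge set exactly $\{e_1,\dots,e_p\}$, so $x$ lies in the interior of $F$ and $\dim F=p$. The length of a fixed edge is a $1$-Lipschitz function along $\BHV_n$-geodesics (inside each orthant it is a coordinate, and it agrees across the gluing isometries), so the hypothesis $\epsilon<\min_i|e_i|$ forces $B_x(\epsilon)$ to lie entirely in the open star of $F$. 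On that open star the cone description of \cite{BHV,BBI} identifies $\BHV_n$ isometrically with the product $\mathbb{R}^p\times C(\mathrm{lk}\,F)$, where the $\mathbb{R}^p$-factor records $|e_1|,\dots,|e_p|$, the cone $C(\mathrm{lk}\,F)$ is the Euclidean metric cone on the spherical link of $F$, and $x$ sits over the cone point. The top-dimensional simplices of $\mathrm{lk}\,F$ — equivalently, the orthants $S$ with $F\subseteq\overline S$ — are in bijection with the binary trees $T$ for which $\{e_1,\dots,e_p\}$ is a set of splits of $T$; let $N$ be the number of such $T$.

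Next I would compute the volume piece by piece. Fix one such orthant $S_T$ with coordinates $(u_1,\dots,u_p,w_1,\dots,w_{n-3-p})$, where $u_i$ records $|e_i|$ and $x=(|e_1|,\dots,|e_p|,0,\dots,0)$. Because a geodesic issuing from $x$ is a straight line in the $\mathbb{R}^p$-factor and radial (hence sector-preserving) in the cone factor, it stays in $S_T$ whenever its endpoint does, so $d(x,z)$ equals the ordinary Euclidean distance in $S_T$ for $z\in S_T$ near $x$; and since $\epsilon<\min_i|e_i|$, the constraints $u_i\ge 0$ are inactive inside $B_x(\epsilon)$. Hence $B_x(\epsilon)\cap S_T$ is a Euclidean $\epsilon$-ball centered on the subspace $\{w=0\}$ intersected with the orthant $\{w_1\ge 0,\dots,w_{n-3-p}\ge 0\}$, which has volume $2^{-(n-3-p)}A_{n-3}(\epsilon)$. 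The $N$ pieces overlap only in their $\mu$-null boundary faces, so by the measure $\mu$ of Lemma \ref{lebesgue},
$$\mu\big(B_x(\epsilon)\big)=N\cdot\frac{2^p}{2^{n-3}}\,A_{n-3}(\epsilon),$$
and the lemma reduces to the two-sided estimate $2^{n-3-p}\le N\le(2n-2p-5)!!$, with the lower equality holding only when $p=n-3$.

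Then I would count and extremize $N$. A binary refinement of $x$ is an independent choice, at each internal vertex $v$, of a resolution of the star on its $\deg v$ incident edges, of which there are $(2\deg v-5)!!$; so $N=\prod_v(2\deg v-5)!!=\prod_v(2c_v+1)!!$ with $c_v:=\deg v-3\ge 0$, and an Euler-characteristic/edge count gives $\sum_v c_v=n-p-3$ over the $p+1$ internal vertices of $x$. For the upper bound I would invoke the super-multiplicativity $(2a+1)!!\,(2b+1)!!\le(2(a+b)+1)!!$ for $a,b\ge 0$ (a one-line induction on $b$, strict once $a,b\ge 1$), giving $N\le\big(2(n-p-3)+1\big)!!=(2n-2p-5)!!$ — the extreme case being the tree with one internal vertex of degree $n-p$ and all others cherries. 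For the lower bound, $(2c+1)!!\ge 3^{c}$ yields $N\ge 3^{\,n-p-3}$, which is strictly larger than $2^{\,n-p-3}$ as soon as $p<n-3$; whereas $p=n-3$ forces $N=1$ and $\mu(B_x(\epsilon))=A_{n-3}(\epsilon)$ exactly, so the lower bound is attained precisely there.

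I expect the only real obstacle to be the localization step: one must argue carefully that for $\epsilon$ below the minimal edge length $B_x(\epsilon)$ meets only orthants whose closure contains $F$, and that the geodesic from $x$ to any point of such an orthant never leaves that orthant, so that the product–cone model genuinely computes $\mu$ on this ball. The first point follows from the $1$-Lipschitz behavior of edge lengths (any path leaving the open star of $F$ must send some $|e_i|$ to $0$, hence has length $\ge|e_i|>\epsilon$), and the second from the fact that geodesics out of a cone point are radial; once these are in place, the remaining steps are bookkeeping together with the double-factorial inequality.
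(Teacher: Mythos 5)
Your proposal is correct and follows essentially the same route as the paper: decompose $B_x(\epsilon)$ over the orthants containing the face $F$ of $x$, obtain $\mu(B_x(\epsilon))=s(F)\,2^{p-(n-3)}A_{n-3}(\epsilon)$ with $s(F)=\prod_v(2\deg v-5)!!$ over internal vertices, and extremize via $\sum_v(\deg v-3)=n-p-3$. The only differences are that you spell out the localization step (Lipschitz edge lengths, product--cone model) and the double-factorial super-multiplicativity in more detail than the paper does, which is fine but not a different argument.
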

\begin{proof}
First, we note that $x$ is contained in a cubical face $F$ of dimension $p$ in $\BHV_n$. Then $F$ is contained in some number $s(F)$ of top-dimensional orthants, each representing a binary tree topology whose partition set contains the partition set of $x$. The restriction on $\epsilon$ ensures that $B_x(\epsilon)$ intersects no lower-dimensional faces, so just as a neighborhood of a point contained in a $p$-face in an $(n-3)$-cube, the restriction of $B_x(\epsilon)$ to each orthant is isometric to $\left(\frac{1}{2^{codim(F)}}\right)$-th of a Euclidean $\epsilon$-ball. So we have that
\begin{equation}\label{ballmeasure}
 \mu(B_x(\epsilon)) = \frac{s(F)}{2^{n-3-p}} A_{n-3}(\epsilon).
\end{equation}
While $s(F)$ is highly dependent on the topology of $F$, we will show that $s(F) \leq (2n-2p-5)!!$, which gives (\ref{upperbound}). 

Instead of describing the topology of $F$ as a list of $p$ internal partitions, we will now consider the internal nodes $y_1,\dots, y_{p+1}$, with degree sequence $d_1,d_2,\dots,d_{p+1}$. Note that 
\begin{equation}\label{degsum}
\sum_{i=1}^{p+1} (d_i-3) = n-p-3,
\end{equation} 
by the fact that the sum of the full degree sequence of a tree is twice the number of edges, so $\sum d_i + n = 2(n+p)$, from which the equality follows. Then
\begin{equation}\label{sF}
s(F) = \prod (2d_i -5)!!
\end{equation}
because locally, each vertex of degree $d_i$ forms the interior node of a star tree with $d_i$ ``leaves" representing the subtrees. So to find a binary tree with the same subtrees as leaves, we count the orthants in $\BHV_{d_i}$, that is, $(2d_i-5)!!$. This choice fixes all other nodes of $F$, so an element of $s(F)$ is specified uniquely by freely choosing a binary tree at each interior node. 

Next we note that $(2d_i-5)!!$ has $d_i-3$ terms greater than 1. For any degree sequence $d_i$, we then have by (\ref{degsum}) that the product (\ref{sF}) has $(n-p-3)$ non-trivial terms, each of which is at least 3, which gives the lower bound. This product is maximized with the degree sequence $n-p,3,3,\dots,3$, for which $s(F) = (2(n-p)-5)!!$, which gives the upper bound. For $p< n-3$, $s(F)$ is strictly greater than $2^{n-3-p}$. For $p=n-3$, we have a coefficient of 1. These two facts show that the lower bound is achieved only for binary trees.
\end{proof}

\begin{cor}\label{volume} Let $n\geq 4$, $c$ the cone point in $\BHV_n$, $x\neq c \in \BHV_n$. Then $\mu(B_c(\epsilon)) > \mu(B_x(\epsilon))$ for $\epsilon < \min_e w_e$, i.e. smaller than the length of the smallest non-zero edge of $x$.
\end{cor}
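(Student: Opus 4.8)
The plan is to apply Lemma~\ref{volumebound} at both $c$ and $x$ and reduce the desired strict inequality to an elementary comparison of double factorials. First I would compute $\mu(B_c(\epsilon))$ exactly. The cone point has no positive-length edges, so here $p=0$, and $c$ lies in \emph{every} one of the $(2n-5)!!$ top-dimensional orthants; hence by the formula~(\ref{ballmeasure}) for the measure of a small ball,
\[
\mu\big(B_c(\epsilon)\big) \;=\; \frac{(2n-5)!!}{2^{\,n-3}}\,A_{n-3}(\epsilon)
\]
for any $\epsilon>0$. (The $\epsilon$-restriction in Lemma~\ref{volumebound} is vacuous at $c$, since there are no positive-length edges to bound; equivalently, $c$ attains the upper bound of that lemma with $p=0$, the only admissible degree sequence for a $0$-face being the single star node of degree $n$.)

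Next, because $x\neq c$, the tree $x$ has at least one internal edge of positive length, so its number $p$ of positive-length edges satisfies $1\le p\le n-3$. For $\epsilon$ smaller than every nonzero edge weight of $x$ --- which is exactly the hypothesis $\epsilon<\min_e w_e$ --- Lemma~\ref{volumebound} gives
\[
\mu\big(B_x(\epsilon)\big) \;\le\; (2n-2p-5)!!\,\frac{2^{\,p}}{2^{\,n-3}}\,A_{n-3}(\epsilon).
\]
Comparing the two displays, the corollary follows once I show $2^{\,p}\,(2n-2p-5)!! < (2n-5)!!$ for every integer $p$ with $1\le p\le n-3$.

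For that inequality I would write the quotient as a product of consecutive odd integers,
\[
\frac{(2n-5)!!}{(2n-2p-5)!!} \;=\; (2n-5)(2n-7)\cdots(2n-2p-3),
\]
which has exactly $p$ factors. Since $p\le n-3$, the smallest factor $2n-2p-3$ is at least $3$, so every factor exceeds $2$; as there is at least one factor, the product exceeds $2^{\,p}$, which is precisely what was needed. Chaining the estimates gives $\mu(B_x(\epsilon))<\mu(B_c(\epsilon))$. I do not expect a genuine obstacle here; the only points demanding a little care are confirming that the $\epsilon$-bound required to invoke Lemma~\ref{volumebound} at $x$ coincides with the one in the statement and is vacuous at $c$, and observing that $x\neq c$ forces $p\ge 1$, so that the double-factorial inequality is strict.
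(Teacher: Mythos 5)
Your proposal is correct and follows essentially the same route as the paper: compute $\mu(B_c(\epsilon)) = \frac{(2n-5)!!}{2^{n-3}}A_{n-3}(\epsilon)$ exactly, invoke Lemma~\ref{volumebound} at $x$ where $p\geq 1$, and win by a double-factorial comparison. If anything your version is slightly more explicit, since you verify $2^p(2n-2p-5)!! < (2n-5)!!$ for every $1\leq p\leq n-3$ by factoring the quotient into $p$ odd factors each at least $3$, whereas the paper simply quotes the $p=1$ bound (implicitly using that the bound decreases in $p$) and compares $2$ with $2n-5$.
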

\begin{proof}
First note that $\mu(B_c(\epsilon)) = \frac{(2n-5)!!}{2^{n-3}}A_{n-3}(\epsilon)$ for any $\epsilon>0$, where $A_m$ is the volume of the unit ball in $\mathbb{R}^m$. Then for $x \neq c$, $p\geq 1$, so by Lemma \ref{volumebound},
$$ \mu(B_x(\epsilon)) \leq (2n-7)!!\frac{2}{2^{n-3}}A_{n-3}(\epsilon).$$
But since $2 < 2n-5$, $\mu(B_x(\epsilon)) < \mu(B_c(\epsilon))$.
\end{proof}

\subsection{Proof of Main Theorem}\label{stage3}

\begin{proof}
Let $n \geq 4$ be given.

Each of the relabeling automorphisms of $L_n$ is an isometry, and it extends in the obvious way to an isometry of $\BHV^n$ by relabeling the leaves of an arbitrary tree, so we can conclude that $S_{n} \leq \Isom(\BHV_n)$.\footnote{Equivalently, an automorphism of a cube complex with uniform euclidean metric is automatically an isometry.}
Conversely, it remains to be shown that $\Isom(\BHV^n) \leq S_n$. Let $\sigma \in \Isom(\BHV^n)$ be given. 
\begin{enumerate}
\item Let $B_x(\epsilon)$ denote the set of points at distance at most $\epsilon$ from $x$. Then by definition of an isometry, $\sigma( B_x(\epsilon)) = B_{\sigma(x)}(\epsilon)$ for all $\epsilon$.
\item For $x \neq c$, $ \epsilon < \min_e w_e$, the measure $\mu(B_x(\epsilon)) < \mu(B_c(\epsilon))$ by Cor. \ref{volume}.  
\item We conclude that $\sigma(c) = c$ by Lemma \ref{lebesgue}, so $\sigma(B_c(1)) = B_c(1)$.
\item Since $L_n = \partial(\overline{B_c(1)})$ is the set of points at distance $1$ from $c$, we conclude that $\sigma(L_n) = L_n$. 
\item In the remainder of the proof, we will show that $Isom(L_n) = Aut(L_n) \cong S_n$, and this will give the titular result.
\end{enumerate}
Let $\sigma \in Isom(L_n)$be given.  Let $x \in L_n$ be a binary tree, so $x$ is contained in the interior of an ($n-4$)-simplex. Then by Lemma \ref{volumebound} and Lemma \ref{lebesgue}, $\sigma(x)$ is also necessarily a binary tree, and so contained in the interior of an $(n-4)$-simplex in $L_n$. An isometry which restricts to $\tau: \mbox{int}(\Delta^{n-4})\to \mbox{int}(\Delta^{n-4})$ on the interior of an $(n-4)$-simplex must extend by continuity to an isometry $\bar{\tau}:\Delta^{n-4}\to \Delta^{n-4}$. Such an isometry is a simplicial map, sending $k$-simplices to $k$-simplices. But every $k$-simplex in $L_n$ is on the boundary of a maximal simplex (equivalently, every non-binary tree has a choice of additional edges making it binary), so we conclude that $\sigma$ is a simplicial map from $L_n$ to $L_n$, i.e. $\sigma \in Aut(L_n)$. Since every automorphism is an isometry, we conclude $Isom(L_n) \cong Aut(L_n)$, and by Corollary \ref{linkaut}, $Aut(L_n^1) \cong Aut(L_n) \cong S_n$. 
\end{proof}

\end{document}